\documentclass{amsart}

\usepackage{amssymb}
\usepackage{graphicx}
\usepackage{enumerate}
\usepackage{amscd}
\usepackage{color}
\usepackage[all]{xy}

\title{$C^\infty$-logarithmic transformations and generalized complex structures
}
\author{Ryushi Goto}
\address{Department of Mathematics, Graduate School of Science, Osaka University, Toyonaka, Osaka 560-0043, Japan}
\email{goto@math.sci.osaka-u.ac.jp}
\author{Kenta Hayano}
\address{Department of Mathematics, Graduate School of Science, Osaka University, Toyonaka, Osaka 560-0043, Japan}
\email{k-hayano@cr.math.sci.osaka-u.ac.jp}
\subjclass[2010]{Primary 53D18, 53C15; Secondary 53D05, 57D50}
\keywords{generalized complex structure; logarithmic transformation; symplectic structure; $4$-manifolds}

\theoremstyle{plain}
\newtheorem{theorem}{Theorem}[section]
\newtheorem{corollary}[theorem]{Corollary}
\newtheorem{lemma}[theorem]{Lemma}
\newtheorem{proposition}[theorem]{Proposition}

\theoremstyle{definition}
\newtheorem{definition}[theorem]{Definition}

\newtheorem{remark}[theorem]{Remark}

\makeatletter

\makeatother


\setlength{\oddsidemargin}{9pt}
\setlength{\evensidemargin}{9pt}
\setlength{\textwidth}{435pt}

\def\Int{\operatorname{Int}}
\def\Diff{\operatorname{Diff}}

\def\id{\operatorname{id}}

\def\J{\mathcal{J}}
\def\C{\mathbb{C}}
\def\ol{\overline}
\def\ome{\omega}
\def\w{\wedge}

\begin{document}

\begin{abstract}
We provide a new construction of generalized complex manifolds by every logarithmic transformations.
Applying a technique of broken Lefschetz fibrations, we obtain twisted generalized complex structures with 
arbitrary large numbers of connected components of type changing loci on the manifold which is obtained from a symplectic manifold
by logarithmic transformations of multiplicity $0$ on a symplectic $2$-torus with trivial normal bundle.
Elliptic surfaces with non-zero euler characteristic and 
the connected sums $(2m-1)S^2\times S^2$, $(2m-1)\C P^2\# l \overline{\C P^2}$ and $S^1\times S^3$ admit
twisted generalized complex structures $\J_n$ with $n$ type changing loci for arbitrary large $n$.
\end{abstract}

\maketitle

\section{Introduction}
Generalized complex structures are geometric structures on a manifold introduced by Hitchin which are hybrids of 
ordinary complex structures and symplectic structures \cite{Hitchin_2003}. 
On a $4$-manifold a generalized complex structure of type number $2$ is induced from 
a complex structure and the one of type number $0$ is obtained from a symplectic structure modulo the action of $B$-fields. 
Type changing loci of a generalized complex $4$-manifold $(X, \J)$ is the set of points on which the type number of $\J$  changes  from $0$ to $2$.
We denote by $\kappa(\J)$ the number of connected components of type changing loci of $\J$

Cavalcanti and Gualtieri \cite{Cavalcanti_Gualtieri_2006, Cabalcanti_Gualtieri_2009} constructed intriguing generalized complex structures $\J$ with $\kappa(\J)=1$ on the connected sums $(2m-1)\C P^2\# l \ol{\C P^2} $ for $m\geq 1, l\geq 0$, which do not admit any complex and symplectic structures if $m\neq 1$.
Torres \cite{Torres_2012} constructed many examples of generalized complex structures with multiple type changing loci. 
They used logarithmic transformations of multiplicity $0$ to obtain 
generalized complex $4$-manifolds.

In this note, 
we generalize their construction to arbitrary logarithmic transformations at first.
A $2$-torus $T$ on a generalized complex $4$-manifold $(X, \J)$ is called {\it symplectic} if 
$\J$ is induced from a symplectic structure $\ome$ \footnote{
A symplectic structure $\omega$ induces the generalized complex structure $\mathcal{J}_\omega = \begin{pmatrix}
0 & -\omega^{-1} \\
\omega & 0
\end{pmatrix}$ . } on 
a neighborhood of $T$ on which $T$ is symplectic with respect to $\ome$.
We start with a generalized complex manifold $(X, \J)$ with $\kappa(\J)=m$ which has symplectic $2$-tori of self-intersection number zero. 
Then it is shown that a nontrivial logarithmic transformation on the symplectic $2$-tori of $X$ yields a new generalized complex structure $\J'$ with 
$\kappa(\J')=m+1$.
We apply suitable logarithmic transformations of multiplicity $1$ successively
which do not change the diffeomorphism type of a manifold.
Thus we obtain 
a family of generalized complex structures $\{\J_n\}$ with $\kappa(\J_n)=n$
for every $n >m$ on a manifold.

If a generalized complex structure gives a generalized K\"ahler $4$-manifold, it turns out  that the number of connected components of type changing loci are less than or equal to two.  In fact, generalized K\"ahler structures are equivalent to bihermitian structures $(J^+, J^-, h, b)$ and the type changing loci are given by the set
$\{ p\in X\, |\, J^+_p=\pm J^-_p\,\}$ which coincides with zeros of a holomorphic Poisson structures \cite{A.G.G_2006, Hitchin_2007, Goto_2010}. 
Thus our result is a sharp contrast to the one of generalized complex structures in generalized K\"ahler manifolds. 

In Section $2$, we give a brief explanation of generalized complex structures. 
In Section $3$, we give our construction of generalized complex manifolds (Theorem \ref{thm_logtransform_GCS}).
Moishezon proves that logarithmic transformations of multiplicity one do not change the diffeomorphism type of a genus $1$-Lefschetz fibration with a cusp neighborhood. 
Then Theorem \ref{thm_logtransform_GCS} yields generalized complex structures $\J_n$ with $\kappa(\J_n)=n$ for 
every $n\geq 0$ on a genus $1$-Lefschetz fibration with a cusp neighborhood. In Section \ref{subsec_elliptic}, we also show that 
the connected sum $(2m-1)\C P^2\# l\overline{\C P^2}$ for $m\geq 1, l\geq 0$ admits generalized complex structures $\J_n$ with $\kappa(\J_n)=n$ for 
every $n\geq 0$.
In Section \ref{subsec_multiplicity0LT}, we develop a technique of broken Lefschetz fibrations\footnote{ Broken Lefschetz fibrations are introduced by Auroux, Donaldson and Katzarkov from the context of degenerate symplectic structures which are studied from view points of topology of $4$-manifolds \cite{A.D.K_2005}} and
show that a certain logarithmic transformation of multiplicity one on a $2$-torus also preserves the diffeomorphism type of a manifold.
This result has an independent importance from view points of topology of $4$-manifolds (see Lemma \ref{lem_logtrans_fold}).   
We apply Lemma \ref{lem_logtrans_fold} to a generalized complex $4$-manifold $X$ with $m$ type changing loci and a symplectic $2$-torus $T$ of self-intersection number zero.
Then it turns out that
 the manifold which is given by logarithmic transformations of multiplicity $0$ on $X$ along $T$
 admits a (twisted) generalized complex structure $\J_n$ with $n$ type changing loci for every $n>m$ (Theorem \ref{thm_GCS_multiplicity0}). 
 We apply Theorem \ref{thm_GCS_multiplicity0} to the connected sum $(2m+1) S^2\times S^2$ and $S^1\times S^3$
(Corollary \ref{cor_GCS_product_sphere} and Corollary \ref{cor_GCS_S1S3}).

After the authors submitted their paper to the Arxiv, Rafael Torres kindly sent us a message that he and Yazinski 
constructed several examples of generalized complex structures with an arbitrary large type changing loci by a different method applying
only logarithmic transformations of multiplicity $0$ \cite{Torres_Yazinski}.

\section{Generalized complex structures}\label{section_GCS}
Let $X$ be a manifold of dimension $2n$ with a closed $3$-form $H$. 
The $H$-twisted {\it Courant bracket} of $TX\oplus T^\ast X$ is given by  
$$
[V + \xi, W + \eta]_H = [V, W] + \mathcal{L}_V\eta - \mathcal{L}_W\xi - \frac{1}{2}d(\eta(V) - \xi (W)) + i_Vi_W H, 
$$
where $[V, W]$ denotes the ordinary bracket of vector fields $V$ and $W$ and 
$\mathcal{L}_V\eta$ is the Lie derivative of a $1$-form $\eta$ by $V$ and 
$\xi(W)$ is the coupling between a vector filed $W$ and a $1$-form $\xi$ and $i_W H$ denotes the interior product.
The bundle $TX \oplus T^\ast X$ inherits the non-degenerate symmetric bilinear form with signature $(2n,2n)$ defined by
$$
\left<V + \xi, W+\eta\right> = \frac{1}{2}(\eta(V) + \xi(W)). 
$$

\begin{definition}
An {\it $H$-twisted generalized complex structure} on $X$ is an endmorphism of $\mathcal{J}$ of the bundle $TX \oplus T^\ast X$ satisfying the following properties: 
\begin{enumerate}
\item $\mathcal{J}^2=-\text{\rm id}$;
\item $\mathcal{J}$ preserves the symmetric bilinear form $\left<\cdot, \cdot\right>$; 
\item the subbundle $L_\mathcal{J}$ of the complexified bundle $(TX\oplus T^\ast X)^\mathbb{C}$ which consists of $\sqrt{-1}$-eigenvectors of $\mathcal{J}$ is closed under the Courant bracket $[\cdot, \cdot]_H$. 
\end{enumerate}
\noindent
We call $\mathcal{J}$ a {\it generalized complex structure} on $X$ if a $3$-form $H$ vanishes at every point in $X$.
\end{definition}
The symmetric bilinear form $\langle\, ,\,\rangle$ yields the Clifford algebra of the complexified bundle $(TX\oplus T^\ast X)^\C$.  
The action of the Clifford algebra of $(TX\oplus T^\ast X)^\C$ on the space of complex differential forms $\wedge^\bullet (T^\ast X)^\C = \oplus_{i=1}^{2n} \wedge^i (T^\ast X)^\mathbb{C}$ is given by
$$
(V +\xi)\cdot \rho = i_V\rho + \xi \wedge \rho,
$$
where $i_V$ denotes the interior product of $\rho$ by a vector filed $V$ and 
$\xi\wedge\rho$ is the wedge product of $\rho$ by a $1$-form $\xi$.
An $H$-twisted generalized complex structure $\mathcal{J}$ yields the {\it canonical line bundle} $K_\mathcal{J}\subset \wedge^\bullet T_\mathbb{C}^\ast X$ which consists of differential forms  
annihilated by elements of $L_\J$.
The canonical line bundle $K_\mathcal{J}$ satisfies the following conditions: 
\begin{enumerate}[(a)]

\item For any point $p\in X$, $\rho_p \in (K_\mathcal{J})_p$ is written as $\exp(B + \sqrt{-1}\omega) \wedge \theta_1 \wedge \cdots \wedge \theta_k$, where $B, \omega $ are real $2$-forms and $\theta_1,\ldots, \theta_k $ are  linearly independent complex $1$-forms.

\item (non-degeneracy condition)  the top degree part $[\rho \wedge \sigma(\overline{\rho})]_{(2n)}\in \wedge^{2n} T_\mathbb{C}^\ast X$ is not equal to $0$ for any $\rho \in K_\mathcal{J}\backslash\{0\}$, where $\overline{\rho}$ denotes the complex conjugate of $\rho$ and
$\sigma: \wedge^\bullet (T^\ast X)^\mathbb{C}\rightarrow \wedge^\bullet (T^\ast X)^\mathbb{C}$ is the involution given by reversal of order of forms, i.e., 
$\theta^1\wedge\theta^2\wedge\cdots\wedge\theta^r\mapsto 
\theta^r\wedge\theta^{r-1}\wedge\cdots\wedge\theta^1$.
\item (integrability condition)  a section $\hat{\rho}$ of $K_\mathcal{J}$ satisfies the following:
$$
d\hat{\rho} + H \wedge \hat{\rho} = (V + \xi)\cdot \hat{\rho},
$$
where  $V + \xi$ is a section of $(TX\oplus T^\ast X)^\mathbb{C}$. 
\end{enumerate}

\begin{definition}
The lowest degree of a differential form in $K_\mathcal{J}\backslash\{0\}$ at a point $p\in X$ is called the {\it type number} of $\mathcal{J}$ at $p$. 
\end{definition}
Conversely, a line bundle $K\subset \wedge^\bullet (T^\ast X)^\mathbb{C}$ satisfying the conditions (a), (b) and (c) gives an $H$-twisted generalized complex structure $\mathcal{J}$ by the following:
 It follows from the condition (a) that the subbundle $L\subset T_\mathbb{C}X\oplus T^\ast_\mathbb{C}X$ which consists of annihilators of $K$ is maximal isotropic with respect to $\left<\cdot, \cdot\right>$. 
The condition (b) is equivalent to the condition $L\cap \overline{L} = \{0\}$ which gives rise to a direct sum decomposition $(TX \oplus T^\ast X)^\mathbb{C} = L \oplus \overline{L}$. 
We define an endmorphism $\mathcal{J}$ on $T_\mathbb{C}X \oplus T^\ast_\mathbb{C}X$ by
\begin{equation*}
\mathcal{J}(\varphi) = \begin{cases}
\sqrt{-1}\varphi & \text{(if $\varphi\in L$)}, \\
-\sqrt{-1}\varphi & \text{(if $\varphi\in \overline{L}$)}.
\end{cases}
\end{equation*}
It follows that $\J$ is compatible with the bilinear form $\left<\cdot, \cdot\right>$. 
The condition (c) implies that $L$ is closed under the Courant bracket $[\cdot, \cdot]_H$. 
Thus $\mathcal{J}$ is an $H$-twisted generalized complex structure on $X$. 

\begin{remark}
If a $4$-manifold $X$ admits an $H$-twisted generalized complex structure $\mathcal{J}$, then $X$ admits an $H^\prime$-twisted generalized complex structure $\mathcal{J}^\prime$ for any closed $3$-form $H^\prime$ which is cohomologous to $H$. 
Indeed, there exists a $2$-form $\theta$ such that $H^\prime$ is equal to $H +d\theta$ and then $K^\prime := \exp(-\theta) \cdot K_\mathcal{J} $ is a line bundle which satisfies the conditions (a) and (b). 
If $\hat{\rho}$ satisfies $d\hat{\rho}+ H\w\hat{\rho}=(V+\xi)\cdot\hat{\rho}$, then it follows that
$$
d(\exp(-\theta)\cdot \hat{\rho}) + H^\prime\wedge (\exp(-\theta)\cdot\hat{\rho}) = 
(V'+\xi')\cdot  (\exp(-\theta)\cdot\hat{\rho}), 
$$where
$(V'+\xi')=\left(\exp(-\theta)\cdot (V+ \xi) \cdot \exp(\theta)\right) \in (TX\oplus T^*X)^{\mathbb{C}}.$
Thus the line bundle $K'$ satisfies the condition (c) for $H^\prime$. 
In particular, if the cohomology group $H^3(X;\mathbb{R})$ is trivial, then any twisted generalized complex structure can be changed into the untwisted one. 
\end{remark}

Let $(X, \mathcal{J})$ be a generalized complex $4$-manifold with even type numbers. 
The natural projection $\wedge^\bullet (T^\ast X)^\mathbb{C} \rightarrow \wedge^0 (T^\ast X)^\mathbb{C} \cong \mathbb{C}$ gives rise to the canonical section $s \in \Gamma(K_\mathcal{J}^\ast)$. 
The type number of $\mathcal{J}$ jumps from $0$ to $2$ when one goes into zeros of the section $s$. 
We call the set of zeros $s^{-1}(0)$ the {\it type changing loci} of $\mathcal{J}$. 
A point $p \in s^{-1}(0)$ is said to be {\it nondegenerate} if $p$ is a nondegenerate zero of $s$. 
Cavalcanti and Gualtieri proved in \cite{Cavalcanti_Gualtieri_2006} that any compact component of the type changing loci of $\mathcal{J}$ which consists of nondegenerate points is a smooth elliptic curve whose complex structure is induced by $\mathcal{J}$.

\section{$C^\infty$-logarithmic transformations for generalized complex $4$-manifolds}
Let $T$ be an embedded torus in a $4$-manifold $X$ with trivial normal bundle. 
We denote by $\nu T$ a regular neighborhood of $T$, which is diffeomorphic to $D^2\times T^2$.
We remove $\nu T$ from $X$ and glue $D^2\times T^2$ by a diffeomorphism $\psi: \partial D^2\times T^2 \rightarrow \partial \nu T$. 
The procedure as above is called 
a {\it $C^\infty$-logarithmic transformation} on $X$ along $T$ which yields a manifold 
$X_\psi=(X\backslash\Int(\nu T)) \cup_\psi (D^2\times T^2)$.
As explained in \cite{Gompf_Stipsicz}, the circle $\psi(\partial D^2\times \{\ast\})\subset \partial D^2\times T^2$ determines the diffeomorphism type of $X_\psi$. 
A logarithmic transformation is said to be {\it trivial} if the circle $\psi(\partial D^2\times \{\ast\})$ is null-homotopic in $\nu T$. 
Note that a trivial logarithmic transformation on $X$ along $T$ does not change the diffeomorphism type of $X$, that is, $X_\psi$ is diffeomorphic to $X$. 
\begin{theorem}\label{thm_logtransform_GCS}
Let  $\mathcal{J}$ be a twisted generalized complex structure on a $4$-manifold $X$ and an embedded $T$ in $X$  a symplectic torus with trivial normal bundle. 
For any diffeomorphism $\psi:\partial D^2\times T^2 \rightarrow \partial \nu T$, the manifold $X_{\psi}=(X\setminus \Int(\nu T)) \cup_{\psi} D^2\times T^2$ obtained from $X$ by a logarithmic transformation on $T$ admits a twisted generalized complex structure $\mathcal{J}_{\psi}$ which coincides with $\mathcal{J}$ on the complement $X_\psi \setminus D^2\times T^2$. 
Moreover, if the logarithmic transformation is not trivial, $\mathcal{J}_{\psi}$ has a non-empty type changing locus $T_\psi$ in $D^2\times T^2\subset X_\psi$ and $\kappa(\J_\psi)=\kappa(\J)+1$.

\end{theorem}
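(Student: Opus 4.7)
The plan is to leave $\mathcal{J}$ untouched on $X\setminus \Int(\nu T)$ and construct a twisted generalized complex structure on the newly glued copy of $D^2\times T^2$ that agrees with $\mathcal{J}$ near the boundary (via the identification $\psi$) and carries a type changing torus in the interior. As a first step I would invoke Weinstein's symplectic neighborhood theorem: since $T$ is symplectic with trivial normal bundle, the neighborhood $\nu T$ is symplectomorphic to the standard $(D^2\times T^2,\omega_{\mathrm{std}})$, and in this trivialization the canonical line of $\mathcal{J}|_{\nu T}$ is generated by $e^{B+\sqrt{-1}\omega_{\mathrm{std}}}$ for some closed two-form $B$. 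This reduces the problem to constructing a suitable local model on $D^2\times T^2$ whose boundary behavior matches $\psi^{\ast}\bigl(e^{B+\sqrt{-1}\omega_{\mathrm{std}}}\bigr)$ up to a $B$-field twist.

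The core of the proof is an explicit local model in the spirit of Cavalcanti and Gualtieri. Using polar coordinates $(r,\theta_0)$ on $D^2$ and angular coordinates $(\theta_1,\theta_2)$ on $T^2$, I would look for a canonical form
\begin{equation*}
\rho_0 \;=\; f(r)\,e^{B_0+\sqrt{-1}\omega_0}\;+\;\beta(r),
\end{equation*}
where $B_0,\omega_0$ are real two-forms, $\beta(r)$ is an $r$-dependent complex two-form, and $f(r)$ is a smooth complex function vanishing precisely at a single intermediate value $r=r_0$. For $r$ close to $1$ one arranges $f(1)\ne 0$ and $\beta\equiv 0$, so that $\rho_0$ is a nonvanishing multiple of $e^{B+\sqrt{-1}\omega_{\mathrm{std}}}$ and the GCS is of type $0$ on a collar; at $r=r_0$ the degree-zero part of $\rho_0$ vanishes and the remaining two-form $\beta(r_0)$ is required to decompose as $\theta^{(1)}\wedge\theta^{(2)}$, with $\theta^{(1)},\theta^{(2)}$ together with their complex conjugates spanning the complexified cotangent space; this yields type $2$ along the torus $T_\psi:=\{r=r_0\}\times T^2$. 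Substituting the ansatz into the integrability condition (c) of Section~\ref{section_GCS} translates into a system of ordinary differential equations for the coefficients of $\beta$ and for an auxiliary closed three-form $H_0$ on $D^2\times T^2$, which can be solved by prescribing boundary data at $r=1$ and interpolating smoothly in $r$.

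With the local model in hand I would perform the gluing. Both the pullback $\psi^{\ast}(\mathcal{J}|_{\nu T})$ and the local model near $\{r=1\}$ are of type zero and symplectic up to a $B$-field twist, so they differ on a collar of $\partial D^2\times T^2$ only by a $B$-field transformation and an exact modification of the three-form; by the remark at the end of Section~\ref{section_GCS} such modifications are absorbed into a global closed three-form $H_\psi$ on $X_\psi$, producing the desired $\mathcal{J}_\psi$ which coincides with $\mathcal{J}$ outside $D^2\times T^2$. The torus $T_\psi$ lies in the interior of the new piece and is disjoint from the type changing loci of $\mathcal{J}$, so when $\psi$ is nontrivial it furnishes a genuinely new connected component and one obtains $\kappa(\mathcal{J}_\psi)=\kappa(\mathcal{J})+1$. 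The main obstacle I anticipate is the simultaneous realization of the pointwise conditions (a) and (b) and of integrability (c) by $\rho_0$: keeping $\beta(r)$ nondegenerate in the Clifford sense across the wall $\{r=r_0\}$, where the degree-zero coefficient changes sign, is the delicate matching problem, and absorbing any remaining mismatch between $\psi^{\ast}\omega_{\mathrm{std}}$ and the collar behavior of the local model into the $B$-field is what makes the construction work uniformly in $\psi$.
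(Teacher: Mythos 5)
Your overall strategy (Weinstein normal form, an explicit Cavalcanti--Gualtieri-type local model on $D^2\times T^2$, and a collar matching) is the same as the paper's, but your local model has a genuine flaw and your gluing step elides the actual content of the proof. You take the degree-zero part of the pure spinor to be a function $f(r)$ of the radius alone, vanishing at an intermediate radius $r_0$; its zero set is then the hypersurface $\{|z_1|=r_0\}\times T^2\cong S^1\times T^2$, which is three-dimensional. Such a zero is automatically a degenerate zero of the canonical section of $K_{\mathcal J}^{\ast}$ (a nondegenerate zero set would have codimension two), so it cannot produce the type-changing torus the theorem asserts, which by Cavalcanti--Gualtieri must be an elliptic curve at nondegenerate points. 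The paper instead takes the degree-zero part to be the complex coordinate $z_1$ itself, so the type change occurs transversally along the core torus $\{0\}\times T^2$ of the glued-in piece: concretely $\varphi_T=z_1\exp(\cdots)$, where the exponent has logarithmic poles $dz_1/z_1$ along the core and multiplication by $z_1$ makes the total form smooth while forcing the type to jump exactly there.

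Second, your claim that the pullback of $\mathcal J|_{\nu T}$ and your model ``differ on a collar only by a $B$-field transformation'' is not justified and is false in general: a $B$-field transformation sends $e^{\sqrt{-1}\omega}$ to $e^{B+\sqrt{-1}\omega}$ and leaves the imaginary (symplectic) part untouched, so to glue you must arrange that the imaginary part of the degree-two part of your model is literally equal to the pullback of the standard symplectic form under the gluing diffeomorphism. This exact matching is the heart of the paper's computation: the attaching map is normalized to a matrix with rows $(m,0,p)$, $(0,1,0)$, $(a,0,b)$ satisfying $mb-pa=1$ and $p\neq 0$, the coefficients of $\varphi_T$ are chosen so that $\Psi^{\ast}\sigma_C$ equals the imaginary part $\omega$ on the overlap, and $a,b$ are adjusted so that $mbs-pa\neq 0$ for all $s\in[0,1]$, which is precisely what guarantees the nondegeneracy condition (b) across the region where the cutoff $\varrho$ interpolates --- the ``delicate matching problem'' you flag but do not resolve. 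Finally, your auxiliary three-form $H_0$ obtained by solving an ODE risks failing to be globally defined and closed on $X_\psi$; the paper avoids this by making $z_1^{-1}\varphi_T$ honestly $d$-closed and introducing the twisting only through the exact form $-d(\tilde{\varrho}(|z_1|^2)B)$ supported in the collar.
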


Since $\nu T$ is diffeomorphic to $D^2\times T^2$,  $\psi$ is obtained by  an orientation preserving self-diffeomorphism of $\partial D^2\times T^2$. 
Any matrix $A\in $SL$(3;\mathbb{Z})$ gives an orientation preserving self-diffeomorphism of $\partial D^2\times T^2$ as follows:
$$
\partial D^2\times T^2 \ni (\theta_1, \theta_2, \theta_3) \mapsto (\theta_1, \theta_2, \theta_3)A\in \partial D^2\times T^2, 
$$
where we identify the manifold $\partial D^2\times T^2$ with $\mathbb{R}^3/\mathbb{Z}^3$ and 
$(\theta_1,\theta_2,\theta_3)$ denotes coordinates of  $\partial D^2\times T^2\cong \mathbb{R}^3/\mathbb{Z}^3$ with
$\theta_1\in \partial D^2$ and 
 $(\theta^2,\theta^3)\in T^2$. 
It is known that any orientation preserving self-diffeomorphism of $\partial D^2\times T^2$ is isotopic to the diffeomorphism represented by a matrix in SL$(3;\mathbb{Z})$.

\begin{proof}[Proof of Theorem \ref{thm_logtransform_GCS}]

If the logarithmic transformation determined by $\psi$ is trivial, the statement is obvious. 
We assume that the logarithmic transformation is not trivial. 

Let $\omega_T$ be a symplectic form of $\nu T$ which induces the generalized complex structure $\mathcal{J}|_{\nu T}$ and $T$ is symplectic with respect to $\ome_T$.
By Weinstein's neighborhood theorem \cite{Weinstein}, we can take a symplectomorphism: 
$$
\Theta: (\nu T, \omega_T) \rightarrow (D^2\times T^2, \sigma_C), 
$$
where $D^2$ denotes the unit disk $\{\, z_1\in \mathbb{C}\, |\, |z_1|\leq1\,\}$ and $T^2$ is the quotient $\mathbb{C}/\mathbb{Z}^2 \cong \mathbb{R}^2/\mathbb{Z}^2$ with a coordinate $z_2$
and  $\sigma_C= \sqrt{-1}C (dz_1\wedge d\bar{z}_1 + dz_2 \wedge d\bar{z}_2)$ for 
the constant $C = \frac{1}{2}\int_{T}\omega_T$.
Using this identification, the attaching map $\psi$ can be regarded as a matrix $A_\psi \in $SL$(3;\mathbb{Z})$. 
Any matrix $P\in $SL$(2;\mathbb{Z})$ induces a self-diffeomorphism $P: T^2\rightarrow T^2$. 
Since the map $\id_{D^2}\times P$ preserves the form $\sigma_C$ and the diffeomorphism type of $X_\psi$ is determined by the first row of $A_\psi$, we can assume that $A_\psi$ is equal to the following matrix: 
$$
\begin{pmatrix}
m & 0 & p \\
0 & 1 & 0 \\
a & 0 & b
\end{pmatrix}
$$
(see \cite{Gompf_Stipsicz}),
where $m,p,a,b$ are integers which satisfy $mb-pa=1$. 
The first row of $A_\psi$ is $(m,0,p)$ and it follows that $p\neq 0$ since $\psi$ is not trivial.
Thus we can take $a$ and $b$ satisfying the condition 
\begin{equation}\label{eq:condition} mbs -pa \neq 0\quad \text{\rm for all } s\in[0,1] \end{equation} by replacing 
$(a,b)$ by $(a+ml, b+pl)$ for a suitable integer $l$ if necessary.
Let $D_k $ be the annulus $\{z_1 \in \mathbb{C} \hspace{.3em}|\hspace{.3em} k<|z|\leq 1\}$ for $k \in [0,1]$. 
We define a diffeomorphism $\Psi: D_{\frac{1}{e}} \times T^2 \rightarrow D_{0}\times T^2$ as follows: 
$$
\Psi(r, \theta_1,\theta_2, \theta_3) = (\sqrt{\log(er)}, m\theta_1+a\theta_3, \theta_2, p\theta_1+b\theta_3), 
$$
where $z_1=r \exp(\sqrt{-1}\theta_1)$ and $z_2=\theta_2 + \sqrt{-1}\theta_3$. 
The manifold $X_{\psi}$ is diffeomorphic to the following manifold: 
$$
X_{\Psi} = (X\setminus\Int(\nu T)) \cup_{\Psi} D^2\times T^2. 
$$
Thus it suffices to construct a twisted generalized complex structure on $X_\Psi$ which satisfies the conditions in Theorem 3.1. 

We denote by $\varphi_T\in \wedge^{\text{even}}D^2\times T^2$ the following form: 
\begin{equation*}
z_1 \exp\left(-\frac{mC}{2}\varrho(|z_1|^2)\frac{dz_1}{z_1}\wedge \frac{d\bar{z}_1}{\bar{z}_1} - bC dz_2\wedge d\bar{z}_2 + C\left( \frac{a}{2} -p \right) \frac{dz_1}{z_1}\wedge dz_2 -C\left( \frac{a}{2}+p \right) \frac{dz_1}{z_1} \wedge d\bar{z}_2\right), 
\end{equation*}
where $\varrho: \mathbb{R}\rightarrow [0,1]$ is a monotonic increasing function which satisfies $\varrho(r)=0$ if $|r|<\frac{1}{2e^2}$ and $\varrho(r)=1$ if $|r|\geq \frac{1}{e^2}$. 
The form $\varphi_T$ satisfies the condition (a) in Section \ref{section_GCS}. 
It follows form the condition (\ref{eq:condition})  that the top-degree part $[\varphi_T \wedge \sigma(\overline{\varphi_T})]_{(4)}$ is not trivial. 
Since ${z_1}^{-1}\varphi_T$ is $d$-closed on $D_0\times T^2$, the form $\varphi_T$ satisfies the condition (c) in Section \ref{section_GCS}. 
Thus $\varphi_T$ gives a generalized complex structure on $D^2\times T^2$. 
Denote by $B$ and $\omega$ the real part and the imaginary part of the degree-$2$ part of the form ${z_1}^{-1}\varphi_T$, respectively. 
Then it follows from a direct calculation that the pullback $\Psi^\ast \sigma_C$ is equal to $\omega$. 
We take a monotonic decreasing function $\tilde{\varrho}:\mathbb{R}\rightarrow [0,1]$ which satisfies $\tilde{\varrho}(r)=1$ for $|r|<\frac{1}{2}$ and $\tilde{\varrho}(r)=0$ for $|r|\geq 1-\varepsilon$, where $\varepsilon>0$ is a sufficiently small number. 
We define a $2$-form $\tilde{B} \in \Omega^2(X\setminus T)$ by 
\begin{equation*}
\tilde{B} = \begin{cases}
{\Psi^{-1}}^\ast (\tilde{\varrho}(|z_1|^2) B) & \text{on }\nu T\setminus T, \\
0 & \text{on }X\setminus \nu T.  
\end{cases}
\end{equation*}
Then the manifold $X\setminus T$ admits a twisted generalized complex structure $\mathcal{J}^\prime_\psi$ such that $\exp(\tilde{B}+ \sqrt{-1}\omega_T) \in \Omega_\mathbb{C}^\bullet(\nu T\setminus T)$ is a local section of the canonical bundle $K_{\mathcal{J}^\prime_\psi}$.  
Since $\varphi_T$ gives a generalized complex structure on $D^2\times T^2$, the form $\exp((\tilde{\varrho}(|z_1|^2)-1)B) \varphi_T$ induces a $(-d(\tilde{\varrho}(|z_1|^2)B))$-twisted generalized complex structure. 
Since the $2$-form $\Psi^\ast(\tilde{B}+ \sqrt{-1}\sigma_C)$ is equal to $\tilde{\varrho}(|z_1|^2)B+\sqrt{-1}\omega$, we obtain a twisted generalized complex structure on $X_\Psi$ which satisifies the conditions in Theorem 3.1.
This completes the proof of Theorem \ref{thm_logtransform_GCS}. 
\end{proof}

\section{Numbers of components of type changing loci}
Let $(X, \mathcal{J})$ be a generalized complex $4$-manifold and $T\subset X$ a symplectic torus with respect to $\mathcal{J}$. 
According to Theorem \ref{thm_logtransform_GCS}, the $4$-manifold $X^\prime$ obtained from $X$ by a logarithmic transformation on $T$ admits a twisted generalized complex structure. 
Moreover, if the logarithmic transformation is not trivial, the number of components of type changing loci is increased by $1$. 
Using this observation, we will prove in this section that several $4$-manifolds admit twisted generalized complex structures with arbitrarily many components of type changing loci.

\subsection{Elliptic surfaces}\label{subsec_elliptic}

Let $f: FT\rightarrow D^2$ be a genus-$1$ Lefschetz fibration over $D^2$ with a Lefschetz singularity whose vanishing cycle $c$ is non-separating. 
The total space $FT$ of $f$ is called a {\it fishtail neighborhood}. 
The manifold $FT$ contains a torus $T\subset FT$ as a regular fiber of the fibration $f$. 
We denote by $m\subset \partial\nu T$ a circle which bounds a section of the fibration $f|_{\nu T}: \nu T \rightarrow D^2$. 
For a non-zero integer $k \in \mathbb{Z}$, we take an orientation preserving diffeomorphism $\varphi_k$ of $\partial \nu T$ which maps the homology class $[m]\in H_1(\nu T;\mathbb{Z})$ to $[m] + k[c]$. 
Let $FT^\prime_k$ be a manifold obtained by removing $\nu T$ from $FT$ and gluing it back using $\varphi_k$. 
Gompf \cite{Gompf_2010} proved that there exists an orientation and fiber preserving diffeomorphism from $FT$ to $FT^\prime_k$ whose restriction on the boundary is the identity map for every $k$. 
Every elliptic surface with non-zero Euler characteristic contains a fishtail neighborhood as a submanifold (see \cite[Theorem 8.3.12]{Gompf_Stipsicz}, for example). 
Furthermore, a regular fiber $T\subset FT$ in an elliptic surface is symplectic with respect to the symplectic structure of the elliptic surface. 
We eventually obtain: 

\begin{theorem}\label{thm_GCS_ellipticsurface}

For any non-negative integer $n$, every elliptic surface with non-zero Euler characteristic admits a generalized complex structure with $n$ components of type changing loci. 

\end{theorem}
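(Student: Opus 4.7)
The plan is to prove this by induction on $n$, combining Theorem \ref{thm_logtransform_GCS} with Gompf's observation \cite{Gompf_2010} that multiplicity-$1$ log transformations on a regular fiber of a fishtail neighborhood do not change the diffeomorphism type.

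For the base case $n=0$, every elliptic surface $E$ is K\"ahler, so it carries a symplectic form and hence a generalized complex structure with empty type changing locus. Fix once and for all an initial fishtail neighborhood $FT_0 \subset E$, available from \cite[Theorem 8.3.12]{Gompf_Stipsicz}.

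For the inductive step, I maintain the invariant that $\J_n$ is a twisted generalized complex structure on $E$ with exactly $n$ type changing loci, together with a fishtail neighborhood $FT_n \subset E$ which contains the Lefschetz singular point of its own fibration, is disjoint from every type changing locus of $\J_n$, and on which $\J_n$ agrees with the original K\"ahler structure of $E$. I then pick a regular fiber $T_n \subset FT_n$; it is a symplectic torus of self-intersection number zero, so Theorem \ref{thm_logtransform_GCS} applies. Performing the nontrivial multiplicity-$1$ log transformation $\varphi_k$ with $k\neq 0$ along $T_n$ produces a twisted generalized complex structure $\J_{n+1}$ on the resulting manifold with $\kappa(\J_{n+1})=n+1$. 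The Gompf diffeomorphism from the modified fishtail neighborhood $(FT_n)'_k$ back to $FT_n$ is the identity on the boundary and is fiber preserving, so the underlying manifold is still diffeomorphic to $E$, and I transport $\J_{n+1}$ to $E$ along it.

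The main obstacle, and the step I expect to require the most care, is restoring the invariant so that the induction can be iterated. I must exhibit a fishtail neighborhood $FT_{n+1}\subset E$ disjoint from all $n+1$ type changing loci of $\J_{n+1}$, on which $\J_{n+1}$ coincides with the K\"ahler structure. Here I use that the newly created type changing locus lies entirely inside the log transformation region $D^2\times T^2\subset (FT_n)'_k$, which projects to a small sub-disk of the base of the fibration on $(FT_n)'_k$, whereas the surviving Lefschetz critical value lies over a different point of the base (since $T_n$ was chosen as a regular fiber). I can therefore pick a smaller disk in the base that contains the Lefschetz critical value but misses the projection of the newly created type changing locus; its preimage is the desired $FT_{n+1}\subset (FT_n)'_k$, and on this preimage $\J_{n+1}$ agrees with the K\"ahler form by construction. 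This completes the inductive step, and the theorem follows.
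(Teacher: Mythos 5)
Your proof is correct and follows essentially the same route as the paper: the paper's argument is precisely the combination of Gompf's fiber-preserving, boundary-fixing diffeomorphism $FT\cong FT'_k$ with Theorem \ref{thm_logtransform_GCS}, applied iteratively to a regular fiber of a fishtail neighborhood. Your extra care in restoring the inductive invariant (choosing a smaller fishtail neighborhood disjoint from $\nu T_n$, where the structure is unchanged because Theorem \ref{thm_logtransform_GCS} only modifies $\J$ inside the glued-in $D^2\times T^2$) is exactly the point the paper leaves implicit in its ``we eventually obtain'' conclusion.
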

\begin{proposition}\label{prop_GCS_connectedsums}
For every $k, l \geq 0$ and $m \geq 1$, the connected sum $(2k+1)\C P^2\# l\ol{\C P^2}$ admits generalized complex structures $\J_m$ 
with $\kappa(\J_m)=m$.
\end{proposition}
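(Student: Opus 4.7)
The plan is to bootstrap the Cavalcanti--Gualtieri result ($\kappa=1$) to arbitrary $\kappa=m$ by performing $(m-1)$ additional multiplicity-one logarithmic transformations on a disjoint fishtail fiber \emph{before} executing the multiplicity-zero transformation that changes the diffeomorphism type to the desired connected sum. Unpacking the Cavalcanti--Gualtieri construction, the manifold $(2k+1)\C P^2\# l\ol{\C P^2}$ is realized as the output $Y_{\psi'}$ of a multiplicity-zero logarithmic transformation on some elliptic surface $Y$ of non-zero Euler characteristic (a suitable blow-up of $E(k+1)$, say) along a symplectic regular fiber $T'\subset Y$ of trivial normal bundle. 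Since $Y$ has non-zero Euler characteristic, it contains a fishtail neighborhood $FT$ as in Section~\ref{subsec_elliptic}; because $FT$ and $\nu T'$ can be taken to lie over disjoint disks in the base of the elliptic fibration, we may arrange $FT\cap \nu T'=\emptyset$, and the regular fiber $T\subset FT$ is symplectic for the Gompf--Thurston form on $Y$.

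Following the proof of Theorem~\ref{thm_GCS_ellipticsurface}, we next perform $(m-1)$ non-trivial multiplicity-one logarithmic transformations on $T$ inside $FT$, iteratively invoking Theorem~\ref{thm_logtransform_GCS} to raise the number of components of the type changing locus by one at each step. Each such transformation preserves the diffeomorphism type of $Y$ by Gompf's theorem, so we end up with a twisted generalized complex structure $\J_{m-1}'$ on $Y$ itself with $\kappa(\J_{m-1}')=m-1$. Because Theorem~\ref{thm_logtransform_GCS} guarantees that $\J_{m-1}'$ coincides with the original structure on the complement of the modified $\nu T$-pieces, and $\nu T'$ lies in that complement, the torus $T'$ remains a symplectic torus of self-intersection zero with respect to $\J_{m-1}'$. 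A single further application of Theorem~\ref{thm_logtransform_GCS}, now with the multiplicity-zero attaching map $\psi'$ along $T'$, yields a twisted generalized complex structure $\J_m$ on $Y_{\psi'}\cong (2k+1)\C P^2\# l\ol{\C P^2}$ with $\kappa(\J_m)=(m-1)+1=m$. Since the target is simply connected and hence has vanishing third real cohomology, the remark at the end of Section~\ref{section_GCS} lets us replace the twisted structure by an untwisted generalized complex structure.

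The main obstacle is aligning the two ingredients: we need the Cavalcanti--Gualtieri log-transform torus $T'$ and the fishtail fiber torus $T$ to live simultaneously inside a common elliptic surface, to be disjoint, and to both be realized as symplectic regular fibers of trivial normal bundle. This is bookkeeping rather than a deep difficulty, but it does require a careful unpacking of the Cavalcanti--Gualtieri construction to locate where $T'$ sits and to confirm that an entire annulus of regular elliptic fibers remains available, disjoint from $\nu T'$ and from the critical values giving rise to $FT$.
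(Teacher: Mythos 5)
There is a genuine gap at the very first step: your claimed realization of $(2k+1)\C P^2\# l\ol{\C P^2}$ as the output of a single multiplicity-zero logarithmic transformation on a blow-up of $E(k+1)$ is not what the Cavalcanti--Gualtieri construction gives, and it is false for most values of $l$. The multiplicity-zero transformation on a regular fiber of $E(k+1)$ produces the specific manifold $(2k+1)\C P^2\#(10k+9)\ol{\C P^2}$; blowing up $E(k+1)$ beforehand only \emph{adds} $\ol{\C P^2}$ summands. So your scheme can at best reach $l\geq 10k+9$, and it cannot produce $(2k+1)\C P^2$ itself (the case $l=0$, which is the essential one, since for $k\geq 1$ this manifold admits neither complex nor symplectic structures). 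The indispensable ingredient you have omitted is the \emph{generalized complex blow-down}: Cavalcanti and Gualtieri pass from $(2k+1)\C P^2\#(10k+9)\ol{\C P^2}$ to $(2k+1)\C P^2$ by blowing down $(10k+9)$ times at the level of generalized complex manifolds (\cite[Example 5.3]{Cabalcanti_Gualtieri_2009}), and this surgery happens \emph{after} the logarithmic transformation, not before it. Small values of $l>0$ are then reached by generalized complex blow-\emph{ups} via \cite[Theorem 3.3]{Cabalcanti_Gualtieri_2009}, another step absent from your argument.

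The paper's proof keeps your correct core idea --- a fishtail neighborhood in $E(k+1)$ disjoint from the supports of all the other surgeries, in which one performs non-trivial multiplicity-one logarithmic transformations to raise $\kappa$ by one at a time without changing the diffeomorphism type (Gompf's theorem) --- but orders the steps as: multiplicity-zero transformation on $E(k+1)$, then $(10k+9)$ generalized complex blow-downs to obtain $\J_1$ on $(2k+1)\C P^2$, then $(m-1)$ multiplicity-one transformations in the surviving fishtail neighborhood to obtain $\J_m$ with $\kappa(\J_m)=m$, and finally $l$ generalized complex blow-ups to reach $(2k+1)\C P^2\# l\ol{\C P^2}$. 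Your observation that the fishtail neighborhood can be kept disjoint from the other surgery supports, and your untwisting via $H^3=0$ for simply connected targets, are both fine; but without the blow-down/blow-up machinery the proposition is not proved for general $l$, and in particular not for $l=0$.
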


\begin{proof}

Cavalcanti and Gualtieri constructed a generalized complex structure on the connected sum $(2n-1)\C P^2\#(10n-1)\ol{\C P^2}$ which is obtained from 
the Lefschetz fibration $E(n)$ by a multiplicity $0$ logarithmic transformation on a regular fiber of the fibration. 
They further constructed a generalized complex structure $\J_1$ on the manifold $(2n-1)\C P^2$ by blowing down $(2n-1)\C P^2\#(10n-1)\ol{\C P^2}$ (see \cite[Example 5.3]{Cabalcanti_Gualtieri_2009} for details). 
The Lefschetz fibration $E(n)$ contains a fishtail neighborhood. 
It is easy to verify that we can make the support of all the above surgeries away from the fishtail neighborhood. 
Thus, we can apply multiplicity $1$ logarithmic transformations on regular fibers of the fishtail neighborhood arbitrarily many times so that the diffeomorphism type of $(2n-1)\C P^2$ is unchanged. 
In particular, we can construct generalized complex structures $\J_m$ of $(2n-1)\C P^2$ with $\kappa(\J_m) = m$. 
Using $\J_m$, we can construct a generalized complex structure of the manifold $(2n-1)\C P^2 \# l \ol{\C P^2}$ with $m$ components of type changing loci for arbitrarily large $m$ (see \cite[Theorem 3.3]{Cabalcanti_Gualtieri_2009}). 
This completes the proof of Proposition \ref{prop_GCS_connectedsums}. 
\end{proof}

\subsection{Multiplicity $0$ logarithmic transformations}\label{subsec_multiplicity0LT}

We take a small ball $B^3 \subset \Int(S^1\times D^2)$ and denote by $M$ the manifold $S^1\times D^2 \setminus \Int(B^3)$. 
Let $h: M\rightarrow I=[0,1]$ be a Morse function satisfying the following properties: 
\begin{enumerate}[(a)]

\item $h^{-1}(0) = \partial B^3$;

\item $h^{-1}(1) = \partial(S^1\times D^2)$;

\item $h$ has a unique critical point $p_0$ in $h^{-1} (\frac{1}{2})$ whose index is equal to $1$. 

\end{enumerate}
We denote by $Y$ the $4$-manifold $M\times I$. 
Define a map $f: Y\rightarrow I\times I$ as follows: 
$$
f(x,t) = (h(x), t) \hspace{.3em} ((x,t) \in M\times I). 
$$
The set of critical points of $f$ is $\{p_0\}\times I$ and that every critical point of $f$ is an indefinite fold. 
A regular fiber of $f$ in the higher-genus side of the folds is a torus. 

\begin{lemma}\label{lem_submfd_multi0logtrans}

Let $X$ be a $4$-manifold which contains an embedded torus $T\subset X$ with self-intersection $0$. 
Then, the manifold $X^\prime$ obtained from $X$ by a multiplicity $0$ logarithmic transformation on $T$ contains $Y$ as a submanifold. 
Furthermore, if $X$ admits a twisted generalized complex structure which makes $T$ symplectic, then a regular fiber $T^2\subset \Int(Y)$ of $f$ is also symplectic with respect to the induced twisted generalized complex structure on $X^\prime$. 

\end{lemma}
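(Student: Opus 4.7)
The plan is to exhibit $Y$ as a submanifold of the new glued piece $D^2 \times T^2$ produced by the logarithmic transformation, working in the coordinates introduced in the proof of Theorem \ref{thm_logtransform_GCS}. Write the new piece as $D^2 \times T^2$ with polar coordinates $(r, \theta_1)$ on $D^2$ and $(\theta_2, \theta_3)$ on $T^2$, and take the gluing matrix $A_\psi \in \SL(3;\mathbb{Z})$ corresponding to multiplicity $0$, so its first row is $(0, 0, p)$ with $p \neq 0$. First I would choose a small closed ball $B^3 \subset \Int(S^1_{\theta_2} \times D^2_{(r, \theta_1)})$ and a small closed subinterval $I_{\theta_3} \subset S^1_{\theta_3}$, and set
\[
Y := \bigl((S^1_{\theta_2} \times D^2_{(r, \theta_1)}) \setminus \Int B^3\bigr) \times I_{\theta_3} \,\subset\, D^2 \times T^2 \,\subset\, X'.
\]
Reordering the first two factors identifies this with $M \times I$ where $M = (S^1 \times D^2) \setminus \Int B^3$. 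Since $S^1 \times D^2$ is obtained from $B^3$ by attaching a single $1$-handle, $M$ is exactly the $1$-handle cobordism from $\partial B^3 = S^2$ to $\partial(S^1 \times D^2) = T^2$, so a Morse function $h \colon M \to I$ satisfying (a)--(c) exists by standard Morse theory; its unique index-$1$ critical point, crossed with the identity on $I_{\theta_3}$, yields the required arc of indefinite folds for $f \colon Y \to I \times I$.

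For the symplectic claim I would invoke Theorem \ref{thm_logtransform_GCS} to install the induced twisted generalized complex structure $\J_\psi$ on $X'$, whose canonical line bundle on the new piece is generated by $\varphi_T$. The associated symplectic form on $\{r > 0\}$ is $\omega = \Psi^\ast \sigma_C$, equal to the imaginary part of the degree-$2$ term of $z_1^{-1} \varphi_T$. Setting $m = 0$ and unwinding the definition gives
\[
\omega \,=\, \frac{aC}{r}\, dr \wedge d\theta_3 \,-\, 2Cp\, d\theta_1 \wedge d\theta_2 \,+\, 2bC\, d\theta_2 \wedge d\theta_3.
\]
By a suitable choice of $h$ so that $h^{-1}(s)$ equals $\{r = r_0\} \times S^1_{\theta_2}$ for $s$ slightly less than $1$, the regular $T^2$ fiber of $f$ over a point $(s_0, t_0)$ with $s_0 > 1/2$ and $t_0 \in \Int(I_{\theta_3})$ is realized in $\Int(Y)$ as $\{r = r_0,\, \theta_3 = t_0\} \times T^2_{(\theta_1, \theta_2)}$ for some $r_0 \in (0, 1)$. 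Pulling $\omega$ back to this torus kills the $dr$ and $d\theta_3$ contributions and leaves $-2Cp\, d\theta_1 \wedge d\theta_2$, which is non-degenerate since $p \neq 0$; hence the fiber is symplectic.

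The main obstacle is the coordinate bookkeeping: one must align the abstract description of $Y$ and its map $f$ with the concrete coordinates of Theorem \ref{thm_logtransform_GCS}, and in particular choose $h$ so that the higher-genus side of the fold corresponds to a region close to $\partial(D^2 \times S^1_{\theta_2}) = \{r = 1\} \times S^1_{\theta_2}$, ensuring that the candidate symplectic torus sits in the region $r > 0$ where $\omega$ is the expected non-degenerate form and avoids the type-changing locus $\{r = 0\}$.
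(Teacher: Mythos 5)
Your argument is correct, but it takes a genuinely different route from the paper. The paper proves the embedding $Y\subset X'$ by Kirby calculus: it decomposes the glued-in piece $\nu T'$ into a $2$-handle, two $3$-handles and a $4$-handle, draws the diagram of $(\nu T\setminus\Int(\nu T'))\cup h_2$, and matches it against Baykur's diagram of $Y$; the symplectic fiber is then located inside $\nu T\setminus\Int(\nu T')$, where the original symplectic form $\omega_T$ is untouched, so the second assertion is essentially immediate from the hypothesis on $T$. You instead exhibit $Y$ explicitly inside the new $D^2\times T^2$ piece as $\bigl((S^1_{\theta_2}\times D^2_{(r,\theta_1)})\setminus\Int B^3\bigr)\times I_{\theta_3}$ and verify nondegeneracy on the candidate fiber by direct computation from $\varphi_T$; your formula for $\omega$ with $m=0$ is correct, and the restriction $-2Cp\,d\theta_1\wedge d\theta_2$ is nowhere zero since $p\neq 0$ and $C=\frac{1}{2}\int_T\omega_T\neq 0$. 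In fact the two choices of fiber essentially agree: under the gluing map $\Psi$ your torus $\{r=r_0,\ \theta_3=t_0\}$ is carried to a parallel push-off of the original $T$ inside $\nu T\setminus T$, which is the paper's torus. One point you should make explicit: on the new piece the canonical bundle is generated by $\exp(\tilde{\varrho}(|z_1|^2)B+\sqrt{-1}\,\omega)$ rather than by $\exp(\sqrt{-1}\,\omega)$, so to conclude that the fiber is symplectic in the sense required to re-apply Theorem \ref{thm_logtransform_GCS} (the structure must be literally induced by a symplectic form on a neighborhood of the torus, per the footnote in the introduction) you should either place the fiber in the region $|z_1|^2\geq 1-\varepsilon$ where $\tilde{\varrho}$ vanishes --- which your choice of $h$, with high-genus fibers the concentric tori near $r=1$, already permits --- or observe that the residual closed $2$-form is exact on a tubular neighborhood of the fiber and can be gauged away there. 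Your approach buys an explicit, coordinate-level description of the fibration and of the symplectic form, at the cost of the handle-decomposition picture that the paper reuses later: the identification with Baykur's model of $Y$ is what makes the vanishing cycle $\alpha$ and the curve $m$ in Lemma \ref{lem_logtrans_fold} concrete.
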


\begin{proof}

We take an identification $\nu T \cong D^2\times T^2$. 
Take a small regular neighborhood $\nu T^\prime \subset \nu T$ of $\{0\}\times T^2$. 
Denote by $\widehat{\nu T}$ the manifold obtained from $\nu T$ by a multiplicity $0$ logarithmic transformation on $\{0\}\times T^2$, that is, $\widehat{\nu T} = (\nu T \setminus \Int(\nu T^\prime)) \cup_{\varphi} \nu T^\prime$. 
We can take a handle decomposition of $\widehat{\nu T}$ so that $\nu T^\prime$ is decomposed into a $2$-handle $h_2$, two $3$-handles $h_3^1, h_3^2$ and a $4$-handle $h_4$. 
Using the method in \cite[Subsection 6.2]{Gompf_Stipsicz}, we can draw a Kirby diagram of $(\nu T \setminus \Int(\nu T^\prime)) \cup h_2$ as described in the left side of Figure~\ref{fig_multiplicity0_logtrans}. 
According to \cite[Section 2]{Baykur_2009}, a Kirby diagram of $Y$ is as in the right side of Figure~\ref{fig_multiplicity0_logtrans}. 
\begin{figure}[htbp]
\begin{center}
\includegraphics[width=80mm]{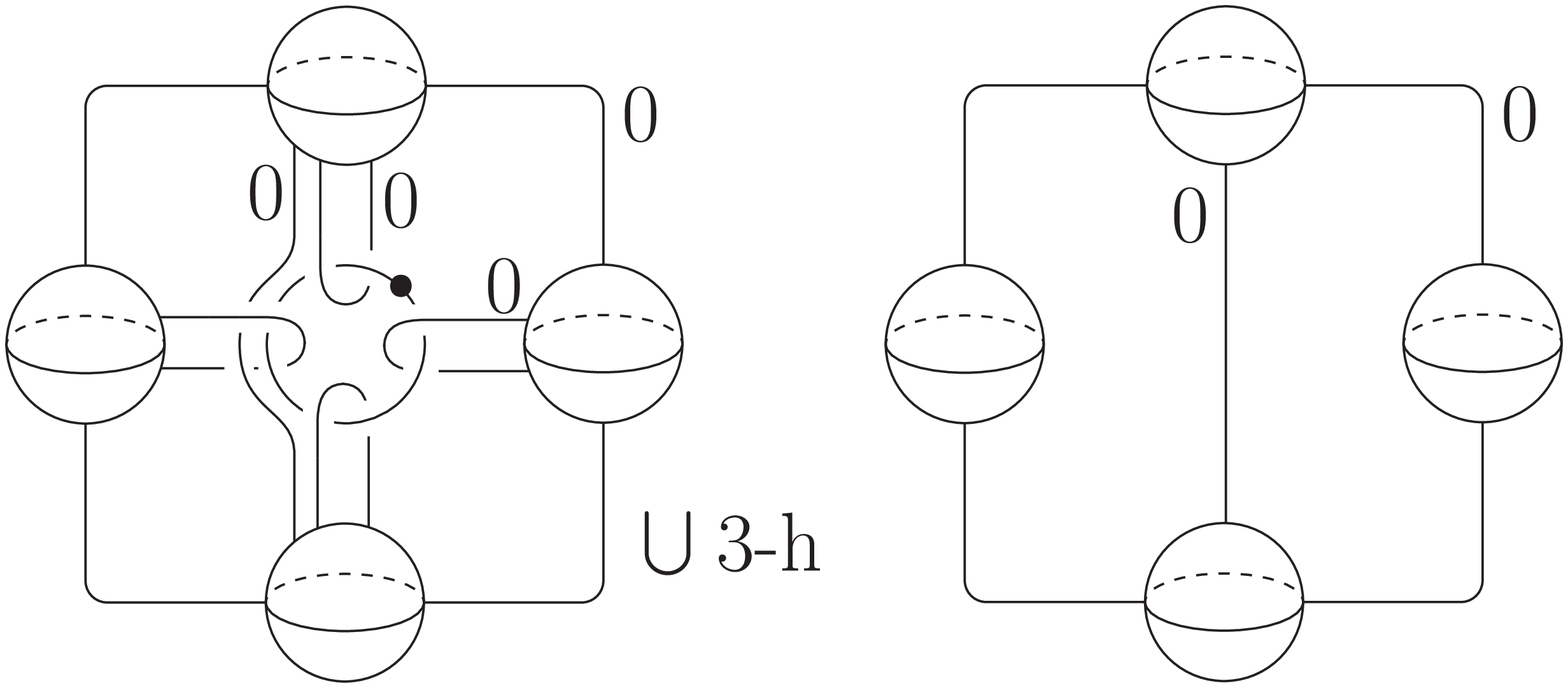}
\end{center}
\caption{The left picture is a Kirby diagram of $(\nu T \setminus \Int(\nu T^\prime)) \cup h_2$, while the right one is a Kirby diagram of $Y$. }
\label{fig_multiplicity0_logtrans}
\end{figure}
By these diagrams, it is clear that the manifold $(\nu T \setminus \Int(\nu T^\prime)) \cup h_2$ contains $Y$ as a submanifold. 
Furthermore, these diagrams also show that the torus $T^2\subset Y \subset X^\prime$ is contained in $\nu T\setminus \Int(\nu T^\prime)$ as a symplectic torus. 
This completes the proof of Lemma \ref{lem_submfd_multi0logtrans}. 
\end{proof}

Let $\alpha \subset T^2$ be a vanishing cycle of the folds of $f$. 
We denote by $m\subset \partial(\nu T^2)$ a circle which bounds a section of a fibration $f|_{\nu T^2}: \nu T^2 \rightarrow f(\nu T^2)$. 

\begin{lemma}\label{lem_logtrans_fold}

Let $\varphi$ be an orientation preserving self-diffeomorphism of $\partial \nu T^2$ such that the induced map $\varphi_\ast$ on the first homology group maps the homology class $[m]\in H_1(\partial \nu T^2;\mathbb{Z})$ to the class $[m]+2n[\alpha]$, where $n$ is an integer. 
We denote by $Y^\prime$ the manifold $(Y\setminus \Int(\nu T^2)) \cup_{\varphi} \nu T^2$. 
Then, there exists an orientation preserving diffeomorphism $\Phi: Y\rightarrow Y^\prime$ satisfying the following properties: 
\begin{enumerate}

\item the restriction $\Phi|_{\partial Y}$ is the identity map (note that this condition makes sense since $\partial Y$ is equal to $\partial Y^\prime$ as a set); 

\item the following diagram commutes: 
\begin{equation*}
	\xymatrix{
	Y \ar[r]^{\Phi}\ar[d]_{f} & Y^\prime \ar[dl]^{f^\prime} \\
	I\times I,
	} 
	\end{equation*}
where $f^\prime:Y^\prime \rightarrow I\times I$ is the fibration induced by $f$. 

\end{enumerate}

\end{lemma}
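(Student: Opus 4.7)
The strategy is to construct the fiber-preserving diffeomorphism $\Phi$ by extending the gluing $\varphi$ over a neighborhood of $T^2\cup(\text{fold arc})$ in $Y$, using the fold geometry to resolve an obstruction coming from $\pi_1(\Diff^+(T^2))$. As noted after Theorem~\ref{thm_logtransform_GCS}, any orientation-preserving self-diffeomorphism of $\partial\nu T^2\cong T^3$ is determined up to isotopy by its action on $H_1$, so we may replace $\varphi$ with the explicit fiber-preserving twist $(\theta,x,y)\mapsto(\theta,x+2n\theta,y)$ on $\partial D^2\times T^2$, where $(x,y)$ parametrize $T^2$ with $\alpha$ corresponding to the $x$-direction.

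A naive attempt to extend $\varphi^{-1}$ to $\nu T^2\cong D^2\times T^2$ as a fiber-preserving family $(r,\theta)\mapsto F_{r,\theta}\in\Diff^+(T^2)$ fails: the boundary loop $\theta\mapsto(\text{translation by }-2n\theta\cdot\alpha)$ represents $-2n$ times a generator of $\pi_1(T^2)\subset\pi_1(\Diff^+(T^2))$, hence is nontrivial for $n\neq 0$. The fold provides exactly the room needed to kill this obstruction. In local coordinates $(x_1,x_2,x_3,t)$ near the critical arc $\{p_0\}\times I$ with $h=\tfrac12-x_1^2+x_2^2+x_3^2$, the $SO(2)$-action rotating $(x_2,x_3)$ preserves $h$ and commutes with $f$. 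On higher-genus level sets ($s>\tfrac12$) this action acts on each $T^2$-fiber as translation along $\alpha$; on lower-genus level sets ($s<\tfrac12$), after the local hyperboloid caps off to an $S^2$ in $M$, the action becomes a rotation in $\Diff^+(S^2)\simeq SO(3)$. Since $\pi_1(SO(3))=\mathbb{Z}/2$, the $2n$-fold iterate of this loop is null-homotopic on the $S^2$-side. Combining the $SO(2)$-action on the higher-genus side with the null-homotopy on the lower-genus side produces a smooth family of $h$-preserving diffeomorphisms of a neighborhood of the fold arc, parametrized by a $2$-disk in the base $I\times I$, whose restriction to a suitable boundary circle realizes $\varphi$.

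Using this family, define $\Phi$ to be the identity outside a neighborhood of $\nu T^2\cup(\text{fold arc})$ in $Y$ and to equal the constructed interpolation inside. Condition (1) holds because $\partial Y$ lies in the identity region, and condition (2) holds because each diffeomorphism in the family preserves fibers of $f$, so $f'\circ\Phi=f$. The main obstacle is the concrete geometric construction at the heart of the second paragraph: producing a smooth $f$-preserving family of diffeomorphisms on a neighborhood of the fold that realizes the $\pi_1(SO(3))$-null-homotopy of the $2n$-fold twist loop, and gluing it coherently to the identity outside. As suggested by the Kirby-diagram description from Lemma~\ref{lem_submfd_multi0logtrans}, one likely implements this via handle calculus, absorbing the extra twists through explicit handle slides over the round handle associated with the fold arc. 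The factor of $2n$ in the hypothesis is forced by the $\pi_1(SO(3))=\mathbb{Z}/2$ step; an odd twist would survive the passage across the fold and could not be absorbed.
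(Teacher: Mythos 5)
Your proposal follows essentially the same route as the paper: the twist is pushed across the fold, where the $SO(2)$-rotation in the local fold coordinates turns the $2n$-fold translation along $\alpha$ on the torus fibers into a loop of rotations of the lower-genus $S^2$-fibers, which is null-homotopic in $\Diff^+(S^2)$ by Smale's theorem ($\pi_1(\Diff^+(S^2))\cong\pi_1(SO(3))=\mathbb{Z}/2$) precisely because $2n$ is even. The one place you diverge is the implementation of the extension, which you guess is done by handle calculus; the paper instead carries it out explicitly by trivializing the fibration via parallel transport for a metric that is Euclidean near the fold and writing down the interpolating family in those coordinates.
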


The idea of the proof of this lemma partially relies on that of \cite[Lemma 3.36]{Behrens_preprint}. 
In the proof of \cite[Lemma 3.36]{Behrens_preprint} Behrens extended a self-diffeomorphism of the boundary $\partial \nu T^2$ to the lower-genus side. 
We further have to make this extension trivial on the boundary in the lower-genus side. 

\begin{proof}

For a sufficiently small positive number $\varepsilon >0$, we take an open neighborhood $U_0\subset M$ of $p_0$ and a diffeomorphism
$$
\varphi_0: U_0 \rightarrow \{ (x,y,z)\in \mathbb{R}^3 \hspace{.3em}|\hspace{.3em} x^2+y^2+z^2 < \varepsilon, |x^2+y^2-z^2|< \varepsilon\}
$$
such that the following diagram commutes: 
$$
\xymatrix{
U_0 \ar[r]^{\varphi_0} \ar[d]_{h} & \mathbb{R}^3 \ar[dl]^{q} \\
(-\varepsilon, \varepsilon),
} 
$$
where $q:\mathbb{R}^3\rightarrow \mathbb{R}$ is a quadratic form defined as $q(x,y,z) = x^2+y^2-z^2$. 
We take a Riemannian metric $g_M$ of $M$ so that it coincides on $U_0$ with the pull-back $\varphi_0^\ast(dx^2+dy^2+dz^2)$. 
For any point $p\in M\setminus \{p_0\}$, we denote by $\mathcal{H}_p \subset T_pM$ the orthogonal complement of the subspace $\ker(dh_p)$. 
Note that $\mathcal{H} = \{\mathcal{H}_p\}_{p\in M\setminus \{p_0\}}$ is a horizontal distribution of the fibration $h_{M\setminus \{p_0\}}$. 
Let $\gamma: I \rightarrow I$ be a path defined as $\gamma(s)=1-s$. 
For a point $p\in h^{-1}(1)$, we denote by $\hat{\gamma}_p: I_p\rightarrow M$ the horizontal lift of $\gamma$ whose initial point is $p$, where $I_p\subset I$ is the maximal subinterval. 
We take a subset $V\subset h^{-1}(1)$ as follows: 
$$
V=\left\{p \in h^{-1}(1)~\left|~\hat{\gamma}_p\left(\frac{1-\varepsilon}{2}\right) \right. \in U_0, \left|p_3\circ \varphi_0\left(\hat{\gamma}_p\left(\frac{1-\varepsilon}{2}\right)\right)\right|< \frac{\varepsilon}{2} \right\}, 
$$
where $p_3: \mathbb{R}^3\rightarrow \mathbb{R}$ is the projection onto the third component. 
We take a diffeomorphism $\varphi_1: h^{-1}(1)\rightarrow \mathbb{R}^2/\mathbb{Z}^2$ which satisfies the following conditions: 
\begin{enumerate}

\item $\varphi_1(V) = (\mathbb{R} \times (-\frac{\varepsilon}{2}, \frac{\varepsilon}{2})) /\mathbb{Z}^2$; 

\item $\varphi_0\circ \Phi\circ \varphi_1^{-1}(s_1,s_2)=(\sqrt{\frac{\varepsilon}{2}-s_2^2}\cos(2\pi s_1), \sqrt{\frac{\varepsilon}{2}-s_2^2}\sin(2\pi s_1), s_2)$, where $\Phi: h^{-1}(1) \rightarrow h^{-1}(\frac{1+\varepsilon}{2})$ is the parallel transport determined by the distribution $\mathcal{H}$. 

\end{enumerate}

We denote by $D\subset M$ the descending manifold of $h$ with respect to $\mathcal{H}$, that is, $D$ is the union of $\{p_0\}$ and the set of points in $M$ which converges to $p_0$ when we take a parallel transport from $h^{-1}(0)$ to $h^{-1}(\frac{1}{2})$ using $\mathcal{H}$. 
We take a submanifold $W\subset M$ as follows: 
$$
W = D \cup \{\hat{\gamma}_p(t)\in M \hspace{.3em}|\hspace{.3em} p \in V,  t\in I_p\}. 
$$
We define a diffeomorphism $\varphi_2: (\mathbb{R}\times[\frac{\varepsilon}{2}, 1-\frac{\varepsilon}{2}])/\mathbb{Z}^2 \times I \rightarrow M \setminus W$ as follows: 
$$
\varphi_2(s_1,s_2,t) = \hat{\gamma}_{\varphi_1^{-1}(s_1,s_2)}(t). 
$$
We can also obtain a diffeomorphism $\varphi_3: \mathbb{R}^2/\mathbb{Z}^2\times [0,\frac{1}{2}) \rightarrow h^{-1}((\frac{1}{2},1])$ in the same way as we obtain $\varphi_2$. 
In the rest of the proof of Lemma \ref{lem_logtrans_fold}, we identify $M\setminus W$ (resp. $h^{-1}((\frac{1}{2},1])$) with $(\mathbb{R}\times[\frac{\varepsilon}{2}, 1-\frac{\varepsilon}{2}])/\mathbb{Z}^2 \times I$ (resp. $\mathbb{R}^2/\mathbb{Z}^2\times [0,\frac{1}{2})$) using $\varphi_2$ (resp. $\varphi_3$). 

Denote by $C$ the rectangle $[\frac{3}{4}-\varepsilon,\frac{3}{4}+\varepsilon]\times[\frac{1}{2}-\varepsilon,\frac{1}{2}+\varepsilon] \subset I\times I$. 
We define a self-diffeomorphism $\psi$ of $f^{-1}(\partial C)$ as follows: 
\begin{itemize}
\item For a point $\left(s_1, s_2, \frac{1}{4}+\varepsilon, u\right)\in f^{-1}\left(\left\{\frac{3}{4}-\varepsilon\right\}\times \left[\frac{1}{2}-\varepsilon,\frac{1}{2}+\varepsilon\right]\right)$, we put 
$$
\psi\left(s_1, s_2, \frac{1}{4}+\varepsilon, u\right) = \left(s_1 + 2n\varrho(u), s_2 , \frac{1}{4}+\varepsilon, u\right),
$$
where $\varrho: [0,1] \rightarrow [0,1]$ is a monotonic increasing smooth function whose value is $0$ around $\frac{1}{2}-\varepsilon$ and $1$ around $\frac{1}{2}+\varepsilon$; 

\item $\psi$ is the identity map on the complement of $f^{-1}\left(\left\{\frac{3}{4}-\varepsilon\right\}\times \left[\frac{1}{2}-\varepsilon,\frac{1}{2}+\varepsilon\right]\right)$. 

\end{itemize}
It is sufficient to prove that $\psi$ can be extended to an orientation and fiber preserving self-diffeomorphism of $f^{-1}(I\times I \setminus \Int(C))$ which is the identity map on $\partial Y$. 

We define an orientation and fiber preserving self-diffeomorphism $\Psi_1$ of $f^{-1}\left(\left[\frac{1-\varepsilon}{2},1\right]\times I\setminus \Int(C)\right)$ as follows: 
\begin{itemize}

\item For a point $(s_1,s_2,t,u)\in f^{-1}\left(\left[\frac{1+\varepsilon}{2},\frac{3}{4}-\varepsilon\right]\times I\setminus \Int(C)\right)\cup M\setminus W\times I$, we put: 
$$
\Psi_1(s_1, s_2,t,u) = (s_1 + 2n\varrho(u), s_2 ,t, u). 
$$

\item For a point $(x,y,z,u)\in U_0\times I$, we put: 
$$
\Psi_1(x,y,z,u) = (x\cos(4\pi n \varrho(u)) -y\sin(4\pi n\varrho(u)),x\sin(4\pi n \varrho(u)) +y\cos(4\pi n \varrho(u)), z, u).
$$

\item $\Psi_1$ is the identity map on $f^{-1}\left(\left[\frac{3}{4}-\varepsilon,1\right]\times I\setminus \Int(C)\right)$. 

\end{itemize}
It is easy to verify that the map $\Psi_1$ is well-defined. 

For any $t<\frac{1}{2}$ and $u\in [0,1]$, a fiber $f^{-1}\left(t,u\right)$ and the manifold $D$ intersect at two points. 
Using the distribution $\mathcal{H}$, we can identify $f^{-1}\left(t,u\right)\setminus D$ with $(\mathbb{R}\times (0,1))/\mathbb{Z}^2$. 
In particular, we can identify a fiber $f^{-1}\left(t,u\right)$ with the $2$-sphere $S^2$. 
With this identification understood, we regard the restriction $\Psi_1|_{f^{-1} \left( \left\{\frac{1-\varepsilon}{2} \right\}\times I \right) }$ as a loop in $\Diff^+(S^2)$. 
This loop represents the trivial element in $\pi_1(\Diff^+(S^2), \id)\cong \mathbb{Z}/2\mathbb{Z}$ (this isomorphism is due to the result of Smale \cite{Smale_1959}). 
Hence, we can extend $\Psi_1$ to an orientation and fiber preserving self-diffeomorphism $\Psi_2$ of $Y$ so that the restriction of $\Psi_2$ on $\partial Y$ is the identity map. 
This completes the proof of Lemma \ref{lem_logtrans_fold}. 
\end{proof}

As a corollary of Lemma \ref{lem_logtrans_fold}, we immediately obtain: 

\begin{corollary}\label{cor_logtrans_fold}

Suppose that a $4$-manifold $X$ contains $Y$ as a submanifold. 
Then, we can apply a non-trivial multiplicity $1$ logarithmic transformation on $T^2\subset Y \subset X$ to $X$ so that the diffeomorphism type of $X$ is unchanged under the transformation. 

\end{corollary}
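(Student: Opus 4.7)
The plan is to apply Lemma \ref{lem_logtrans_fold} with any non-zero integer $n$ (say $n=1$) and then extend the resulting diffeomorphism of $Y$ across $X\setminus \Int(Y)$ by the identity map. Specifically, Lemma \ref{lem_logtrans_fold} provides an orientation preserving self-diffeomorphism $\varphi$ of $\partial\nu T^2$ with $\varphi_\ast[m]=[m]+2n[\alpha]$, together with a diffeomorphism $\Phi\colon Y\to Y^\prime=(Y\setminus\Int(\nu T^2))\cup_\varphi \nu T^2$ whose restriction to $\partial Y$ is the identity.

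Before assembling this into a global diffeomorphism, I would first check that the logarithmic transformation on $X$ determined by $\varphi$ has multiplicity $1$ and is non-trivial. In the $\SL(3;\mathbb{Z})$ parametrization from the proof of Theorem \ref{thm_logtransform_GCS}, the diffeomorphism $\varphi$ is represented by a matrix whose first row has the form $(1,0,2n)$, with $[m]$ playing the role of the meridian coordinate and $[\alpha]$ one of the longitudes of $T^2$. The leading $1$ records that the multiplicity is $1$, and the entry $2n\neq 0$ guarantees that the attaching circle $\varphi(\partial D^2\times\{\ast\})$ is not null-homotopic in $\nu T^2$, so the transformation is non-trivial.

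With this in hand, I would write
$$X^\prime=(X\setminus \Int(\nu T^2))\cup_\varphi \nu T^2=(X\setminus \Int(Y))\cup_{\partial Y} Y^\prime$$
and define a diffeomorphism $X\to X^\prime$ by taking $\Phi$ on $Y$ and the identity on $X\setminus \Int(Y)$. These two pieces agree on $\partial Y$ because $\Phi|_{\partial Y}=\id_{\partial Y}$, so they glue to a smooth diffeomorphism of $X$ onto $X^\prime$. All of the real work has already been done in Lemma \ref{lem_logtrans_fold}; the only nontrivial input it provides is the fact that $\Phi$ can be chosen to be the identity on \emph{all} of $\partial Y$ rather than merely on $\partial\nu T^2$, and without this rigidity the extension by the identity across $X\setminus\Int(Y)$ would not exist. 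Consequently there is no further obstacle in proving the corollary beyond routine bookkeeping.
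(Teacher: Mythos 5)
Your proposal is correct and is exactly the argument the paper intends: the corollary is stated as an immediate consequence of Lemma \ref{lem_logtrans_fold}, and the content you supply (reading off multiplicity $1$ and non-triviality from $\varphi_\ast[m]=[m]+2n[\alpha]$, then gluing $\Phi$ to the identity on $X\setminus\Int(Y)$ using $\Phi|_{\partial Y}=\id$) is precisely the routine verification the authors leave implicit. No gaps.
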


\begin{remark}\label{rem_Glucktwist}

The proof of Lemma \ref{lem_logtrans_fold} also implies that the {\it Gluck twist} on a lower-genus regular fiber $S^2\subset Y\subset X$ of $f$ can be realized by a multiplicity $1$ logarithmic transformation on $T^2\subset Y\subset X$. 
Indeed, it is easy to see that the manifold $(X\setminus \Int(\nu T^2)) \cup_{\varphi} \nu T^2$ is diffeomorphic to the manifold obtained from $X$ by the Gluck twist on $S^2$ when we take an attaching map $\varphi$ so that the induced map $\varphi_\ast$ maps $[m]$ to $[m]+(2n+1)[\alpha]$. 

\end{remark}

Combining Lemmas \ref{lem_submfd_multi0logtrans} and \ref{lem_logtrans_fold} together with Theorem \ref{thm_logtransform_GCS}, we obtain: 

\begin{theorem}\label{thm_GCS_multiplicity0}

Let $(X, \mathcal{J})$ be a $4$-manifold with a twisted generalized complex structure. 
Suppose that $X$ has a symplectic torus $T$ with trivial normal bundle. 
Denote by $m$ the number of the components of the type change loci of $\mathcal{J}$. 
For any integer $n \geq m+1$, the manifold $X^\prime$ obtained from $X$ by a multiplicity $0$ logarithmic transformation on $T$ and the manifold obtained from $X$ by the Gluck twist on $S\subset Y \subset X^\prime$ admit a twisted generalized complex structure with $n$ type changing loci. 

\end{theorem}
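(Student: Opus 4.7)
The plan is to combine Theorem~\ref{thm_logtransform_GCS} with the topological rigidity supplied by Corollary~\ref{cor_logtrans_fold} and Remark~\ref{rem_Glucktwist}, iterating logarithmic transformations inside the submanifold $Y \subset X'$ furnished by Lemma~\ref{lem_submfd_multi0logtrans}. Each application of Theorem~\ref{thm_logtransform_GCS} contributes one new connected component to the type changing loci, while the diffeomorphism type of the ambient manifold is steered between $X'$ and its Gluck twist by varying the parity of the coefficient of $[\alpha]$ in the attaching map.

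First I would apply Theorem~\ref{thm_logtransform_GCS} to the pair $(X,\mathcal{J})$ with the given multiplicity $0$ logarithmic transformation on $T$; since this transformation is non-trivial, the output is a twisted generalized complex structure $\mathcal{J}'$ on $X'$ with $\kappa(\mathcal{J}') = m+1$. By Lemma~\ref{lem_submfd_multi0logtrans}, $X'$ contains the model $Y$ as a submanifold together with a regular fiber $T^2 \subset Y$ of $f$ that is symplectic with respect to $\mathcal{J}'$. To reach any $n \geq m+1$ on $X'$ itself, I would then iterate Theorem~\ref{thm_logtransform_GCS} exactly $n - m - 1$ times, each time performing a non-trivial multiplicity $1$ logarithmic transformation on a symplectic regular fiber $T^2 \subset Y$ with attaching map sending $[m]$ to $[m] + 2k[\alpha]$ for a nonzero integer $k$. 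Each step adds one component to the type changing loci, and by Corollary~\ref{cor_logtrans_fold} the diffeomorphism type remains $X'$ after every step.

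For the Gluck-twisted manifold, I would invoke Remark~\ref{rem_Glucktwist} and perform instead a non-trivial multiplicity $1$ logarithmic transformation on $T^2 \subset Y \subset X'$ whose attaching map sends $[m]$ to $[m] + (2k+1)[\alpha]$; by Theorem~\ref{thm_logtransform_GCS} this delivers a twisted generalized complex structure on the Gluck twist of $X'$ with $m+2$ type changing loci, and further even multiplicity $1$ logarithmic transformations inside a persistent copy of $Y$ raise the locus count to any desired value.

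The main obstacle is ensuring that the iteration is actually available, namely that after each logarithmic transformation one can still locate a fresh copy of $Y$ whose regular fibers are symplectic with respect to the currently constructed generalized complex structure. This is handled by the fact that the diffeomorphism in Lemma~\ref{lem_logtrans_fold} is the identity on $\partial Y$ and preserves the fibration $f$, so the unmodified portion of $Y$ (and in particular its supply of symplectic regular fibers disjoint from the previously used $\nu T^2$) transports verbatim into the new ambient manifold; moreover, the generalized complex structure produced by Theorem~\ref{thm_logtransform_GCS} coincides with the previous one outside a neighborhood of the surgery region, so each candidate torus for the next step is automatically symplectic. With this compatibility in hand, the induction on the number of iterations runs without issue and yields the required twisted generalized complex structures with $n$ type changing loci on both manifolds.
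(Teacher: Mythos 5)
Your proposal is correct and follows essentially the same route as the paper, which gives no separate argument beyond stating that the theorem is obtained by combining Lemma \ref{lem_submfd_multi0logtrans} and Lemma \ref{lem_logtrans_fold} (via Corollary \ref{cor_logtrans_fold} and Remark \ref{rem_Glucktwist}) with Theorem \ref{thm_logtransform_GCS}; your write-up usefully makes explicit both the iteration and the persistence of symplectic regular fibers of $f$ outside the surgery region. The only caveat, which applies equally to the paper's implicit argument, is that the Gluck-twisted manifold is first reached with $m+2$ components (one from the multiplicity $0$ transformation creating $Y\subset X'$ and one from the odd multiplicity $1$ transformation realizing the Gluck twist), so the stated range $n\geq m+1$ is attained for $X'$ but only $n\geq m+2$ for the Gluck twist by this method.
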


\begin{corollary}\label{cor_GCS_product_sphere}

For a sufficiently large integer $n$ and a non-negative integer $m$, the manifold $\# (2m+1) S^2\times S^2$ admits a generalized complex structure with $n$ components of type changing loci. 

\end{corollary}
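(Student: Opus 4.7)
The plan is to invoke Theorem \ref{thm_GCS_multiplicity0} with a carefully chosen spin symplectic $4$-manifold $X$. We seek $X$ admitting a symplectic $2$-torus $T$ of trivial normal bundle such that the multiplicity $0$ logarithmic transformation on $T$ (or, if needed, the same transformation followed by the Gluck twist furnished by Theorem \ref{thm_GCS_multiplicity0} and Remark \ref{rem_Glucktwist}) is diffeomorphic to $\#(2m+1)S^2\times S^2$. Because any symplectic form on $X$ induces a generalized complex structure $\J$ with $\kappa(\J)=0$ and makes $T$ symplectic, Theorem \ref{thm_GCS_multiplicity0} would then immediately produce a generalized complex structure on $\#(2m+1)S^2\times S^2$ with any sufficiently large number $n$ of type changing loci.

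The natural candidate for $X$ is a spin elliptic surface $E(k)$ (simply connected, spin and symplectic for $k$ even), possibly connect-summed with a bounded number of copies of $S^2\times S^2$ to adjust the second Betti number. A regular torus fiber of the elliptic fibration serves as the symplectic torus $T$ with trivial normal bundle, and since $E(k)$ contains a fishtail neighborhood it in particular contains the submanifold $Y$ of Lemma \ref{lem_submfd_multi0logtrans}. Consequently, the hypotheses of Theorem \ref{thm_GCS_multiplicity0} are satisfied for $(X,T)$.

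The principal obstacle is the diffeomorphism-type identification of the resulting manifold as $\#(2m+1)S^2\times S^2$. This is the spin analog of the dissolution phenomenon exploited by Cavalcanti--Gualtieri in \cite[Example 5.3]{Cabalcanti_Gualtieri_2009} and in the proof of Proposition \ref{prop_GCS_connectedsums}: there, a multiplicity $0$ logarithmic transformation on $E(n)$ produces $(2n-1)\C P^2\#(10n-1)\ol{\C P^2}$. A parallel Kirby calculus computation, tracking the handle decomposition of $(\nu T\setminus \Int(\nu T'))\cup h_2$ depicted in Figure \ref{fig_multiplicity0_logtrans} and combining it with Wall's theorem that sufficiently many $S^2\times S^2$-summands dissolve exotic smooth structures on simply-connected spin $4$-manifolds, should identify $X'$ with $\#(2m+1)S^2\times S^2$ for an appropriate choice of $k$ and of auxiliary $S^2\times S^2$-summands (the parity constraint of an odd number of factors is what makes the spin symplectic base surface a natural starting point).

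Once this identification is in hand, Theorem \ref{thm_GCS_multiplicity0} immediately supplies twisted generalized complex structures with $n$ type changing loci for every $n$ larger than the number of type changing loci of $\J$, which is zero. Finally, since $\#(2m+1)S^2\times S^2$ is simply connected and has vanishing third cohomology, the remark at the end of Section \ref{section_GCS} converts each twisted structure into an honest (untwisted) generalized complex structure, yielding the desired conclusion.
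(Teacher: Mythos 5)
There is a genuine gap here, and in fact the specific construction you propose cannot work. The crux of the corollary is to exhibit a symplectic $4$-manifold $X$ with a symplectic torus $T$ of trivial normal bundle such that a multiplicity $0$ logarithmic transformation on $T$ yields $\#(2m+1)S^2\times S^2$; you leave this identification as something that ``should'' follow from Kirby calculus and Wall's stabilization theorem, but your candidate $X$ is already ruled out by a signature count. A logarithmic transformation replaces a copy of $D^2\times T^2$ by another copy of $D^2\times T^2$ glued along $T^3$, so by Novikov additivity it preserves the signature, as does connect-summing with copies of $S^2\times S^2$. Since $\Sign(E(k))=-8k$, any manifold obtained from $E(k)\# j(S^2\times S^2)$ by logarithmic transformations has signature $-8k\neq 0$, whereas $\Sign(\#(2m+1)S^2\times S^2)=0$. (Consistently, the proof of Proposition \ref{prop_GCS_connectedsums} recalls that the multiplicity $0$ transformation on a fiber of $E(n)$ produces $(2n-1)\C P^2\#(10n-1)\ol{\C P^2}$, which is not even spin.) Moreover, the auxiliary $S^2\times S^2$-summands you allow are not available within the framework of Theorem \ref{thm_GCS_multiplicity0}: by Taubes, $E(k)\# j(S^2\times S^2)$ carries no symplectic structure for $j\geq 1$ (both summands have $b_2^+>0$), and the paper has no device for extending a generalized complex structure across a stabilization performed \emph{after} the transformation; Wall's theorem would only identify $X'\# N(S^2\times S^2)$ with a standard manifold, not $X'$ itself, which is where the generalized complex structure lives.

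The paper's proof avoids all of this by starting from the signature-zero K\"ahler surface $\Sigma_2\times\Sigma_{m+2}$ and citing \cite[Section 3]{Torres_2012} for the fact that $\#(2m+1)S^2\times S^2$ is obtained from it by a succession of logarithmic transformations on symplectic tori, exactly one of which has multiplicity $0$; Theorem \ref{thm_GCS_multiplicity0} (together with Theorem \ref{thm_logtransform_GCS} for the remaining transformations) then supplies structures with $n$ type changing loci for all sufficiently large $n$. Your closing observation that the vanishing of $H^3$ of a simply connected manifold lets one untwist the resulting structures is correct and implicitly used, but the essential input---the correct starting manifold and the topological identification---is missing from your argument.
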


\begin{proof}

Let $\Sigma_g$ be a closed oriented surface of genus-$g$. 
K\"{a}hler structures of $\Sigma_2$ and $\Sigma_g$ induce that of the product $\Sigma_2\times \Sigma_g$. 
The manifold $\# (2m+1) S^2\times S^2$ can be obtained by successive application of logarithmic transformations on symplectic tori to $\Sigma_2\times \Sigma_{m+2}$ and one of these transformations has multiplicity $0$ (see \cite[Section 3]{Torres_2012}). 
Thus, Corollary \ref{cor_GCS_product_sphere} holds by Theorem \ref{thm_GCS_multiplicity0}. 
\end{proof}

\begin{corollary}\label{cor_GCS_S1S3}

For any positive integer $n$, the manifold $S^1\times S^3$ admits a twisted generalized complex structure with $n$ components of type changing loci. 

\end{corollary}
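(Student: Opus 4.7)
The plan is to realize $S^1\times S^3$ as the result of a multiplicity $0$ logarithmic transformation on a symplectic torus inside a K\"ahler $4$-manifold, and then to invoke Theorem \ref{thm_GCS_multiplicity0}.

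First, I would take $X = T^2\times S^2$ endowed with its product K\"ahler form $\omega$. The induced generalized complex structure $\J_\omega$ is of pure symplectic type, hence $\kappa(\J_\omega)=0$. The submanifold $T = T^2\times \{\mathrm{pt}\}$ is a symplectic $2$-torus with trivial normal bundle $T^2\times D^2$, so it satisfies the hypotheses of Theorem \ref{thm_GCS_multiplicity0}.

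Next, I would identify $S^1\times S^3$ as the outcome of a multiplicity $0$ logarithmic transformation along $T$. Writing $S^2 = D^2_+\cup D^2_-$, one has the decomposition $T^2\times S^2 = (T^2\times D^2_+)\cup_{T^3}(T^2\times D^2_-)$ with $\nu T = T^2\times D^2_+$. Regluing $D^2\times T^2$ via the attaching map coming from the normal form of Section $3$ with $m=0$, $p=1$, $a=-1$, $b=0$ (so the first row of $A_\psi$ is $(0,0,1)$ and the transformation is of multiplicity $0$) sends the new meridian $\partial D^2$ onto one of the circle factors of $T$. Chasing the three generators of $H_1(T^3)$ through this regluing, exactly one of them stays non-bounding on both sides while the other two bound disks on opposite sides; the free $S^1$-action in the surviving direction commutes with the gluing and displays the result as $S^1$ times a genus-one Heegaard splitting of $S^3$, namely $S^1\times S^3$. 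This identification is classical and also appears in \cite[Section 3]{Torres_2012}.

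Finally, I would invoke Theorem \ref{thm_GCS_multiplicity0} applied to $(T^2\times S^2, \J_\omega)$ with $m=\kappa(\J_\omega)=0$; this produces, for every integer $n\geq 1$, a twisted generalized complex structure on $S^1\times S^3$ with $n$ components of type changing loci, as required. The only substantive point is the manifold identification in the second step; once the attaching data on the boundary $T^3$ is correctly matched with the genus-$1$ Heegaard splitting of $S^3$, the rest is a direct application of Theorem \ref{thm_GCS_multiplicity0} with no further obstacle.
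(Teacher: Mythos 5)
Your proposal is correct and follows essentially the same route as the paper: both realize $S^1\times S^3$ as a multiplicity $0$ logarithmic transformation of $S^2\times T^2$ along the symplectic torus $\{\ast\}\times T^2$ (with $\kappa=0$ for the product symplectic structure) and then apply Theorem \ref{thm_GCS_multiplicity0} to get all $n\geq 1$. The only cosmetic difference is in verifying the diffeomorphism type of the surgered manifold: the paper exhibits a Kirby diagram, while you argue via the genus-one Heegaard splitting of $S^3$; both are standard and equally valid.
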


\begin{proof}

The manifold $S^1\times S^3$ can be obtained from $S^2\times T^2$ by a multiplicity $0$ logarithmic transformation on $\{\ast\}\times T^2$. 
Indeed, we can draw a Kirby diagram of the manifold obtained by this transformation as shown in Figure~\ref{fig_diagram_S1S3} and it is easy to verify that this manifold is diffeomorphic to $S^1\times S^3$.  
\begin{figure}[htbp]
\begin{center}
\includegraphics[width=50mm]{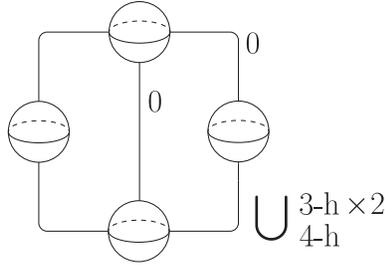}
\end{center}
\caption{A diagram of $S^1\times S^3$.}
\label{fig_diagram_S1S3}
\end{figure}
By Theorem \ref{thm_logtransform_GCS}, $S^1\times S^3$ admits a twisted generalized complex structure $\mathcal{J}_1$ with one component of type changing locus. 
Thus, Corollary \ref{cor_GCS_S1S3} follows from Theorem \ref{thm_GCS_multiplicity0}. 
\end{proof}

\begin{remark}\label{remark_other_GCS}

Torres \cite{Torres_2012} constructed the twisted generalized complex structure $\mathcal{J}_1$ of $S^1\times S^3$ in the proof of Corollary \ref{cor_GCS_S1S3}. 
He further constructed twisted generalized complex structures of $S^2\times T^2$, $S^1\times L(p,1)$, and so on, applying multiplicity $0$ logarithmic transformations on symplectic tori. 
Using Theorem \ref{thm_GCS_multiplicity0}, we can obtain twisted generalized complex structures of these manifolds with $n$ type changing loci for arbitrarily large $n$. 

Torres and Yazinski \cite{Torres_Yazinski} also constructed twisted generalized complex structures of several manifolds such as $S^1\times L(p,1)$ and $T^2\times \Sigma_g$ with arbitrarily large number of connected components of type changing loci. 
In this construction, they used surgery diagrams of $3$-manifolds together with multiplicity $0$ logarithmic transformations. 

\end{remark}

\noindent
{\bf Acknowledgments.}
The authors would like to express their gratitude to Rafael Torres for his helpful comments on generalized complex structures of connected sums of the projective planes and for suggesting that we should apply our results as we explained in Remark \ref{remark_other_GCS}. 
The second author was supported by JSPS Research Fellowships for Young Scientists~(24$\cdot$993).

\end{document}